\newcommand{\calO}{{\mathscr O}}
\newcommand{\RR}{{\mathbb R}}
\def\Krein{Kre\u{\char16}n}
\title{The scattering phase: seen at last}
\author{Jeffrey Galkowski}
\email{j.galkowski@ucl.ac.uk}
\address{Department of Mathematics, University College London, WC1H 0AY, UK}
\author{Pierre Marchand}
\email{pierre.marchand@inria.fr}
\address{Unit\'e de Math\'ematiques Appliqu\'ees de ENSTA Paris,
 91762 Palaiseau Cedex}
\author{Jian Wang}
\email{wangjian@email.unc.edu}
\address{Department of Mathematics, University of North Carolina, Chapel Hill, NC 27599}
\author{Maciej Zworski}
\email{zworski@berkeley.edu}
\address{Department of Mathematics, University of California, Berkeley, CA 94720}
\begin{document}

\begin{abstract}
  The scattering phase, defined as $ \log \det S ( \lambda ) / 2\pi i $ where
  $ S ( \lambda ) $ is the (unitary) scattering matrix, is the
  analogue of the counting function for eigenvalues when dealing with exterior domains and
  is closely related to \Krein{}'s spectral shift function.
  We revisit classical results on asymptotics of the scattering phase and point out that
  it is never monotone in the case of strong trapping of waves. Perhaps more importantly,
  we provide the first numerical calculations of scattering phases for non-radial scatterers.
  They show that the asymptotic Weyl law is accurate even at low frequencies and reveal effects of trapping such as lack of monotonicity. This is achieved by using the recent
  high level multiphysics finite element software \href{https://freefem.org/}{FreeFEM}.
\end{abstract}

\maketitle

\section{Introduction}

The scattering phase and its close relative, the spectral shift function, have been
studied by mathematicians at least since the work of Birman and \Krein{} \cite{bikr}.
In the case of radial scattering, the scattering phase is the sum of phase shifts which are a central and
classical topic in quantum scattering -- see for instance \cite[\S 6.4]{sak}.

The scattering phase is defined using the scattering matrix, $ S ( \lambda ) $, which is a unitary operator
mapping incoming waves to outgoing waves -- see \S \ref{s:form} and Figure~\ref{f:scat}.
Because of its structure, the determinant of $S(\lambda)$ is well defined and we put
\begin{equation}
  \label{eq:scatp}
  \sigma ( \lambda ) := \frac{1}{2 \pi i } \log \det S ( \lambda )  \in \mathbb R , \ \ \ \sigma ( 0 ) = 0,
\end{equation}
where the last condition fixes the choice of $ \log $.

The scattering phase, $ \sigma ( \lambda ) $, is
appealing to mathematicians since it is a replacement for the counting
function of eigenvalues for scattering problems -- see \cite[\S 2.6, \S 3.9]{dizzy} and
references given there. More precisely, as established by Jensen--Kato \cite{jeka}
and Bardos--Guillot--Ralston \cite{bagura}, $ \sigma(\lambda ) $ satisfies
\begin{equation}
  \label{eq:BK} \tr ( f ( - \Delta_{\mathbb R^n \setminus \mathscr O} ) - f ( - \Delta ))  =
  \int_0^\infty f ( \lambda^2 ) \sigma' ( \lambda ) d \lambda , \ \ f \in \mathscr S ( \mathbb R).
\end{equation}
Here, as in the rest of this paper, we specialized to the case of Dirichlet Laplacian,
$ \Delta_{\mathbb R^n \setminus \mathscr O} $ on $ \mathbb R^n \setminus \mathscr O $, where $ \mathscr O \Subset
  \mathbb R^n $ is an open set with a piecewise smooth boundary and connected complement.
(Strictly speaking, $ f ( - \Delta_{\mathbb R^n \setminus \mathscr O} ) $ and $ f ( - \Delta ) $
are defined on $ L^2 ( \mathbb R^n
  \setminus \mathscr O ) $ and $ L^2 ( \mathbb R^n ) $, respectively, using the spectral theorem,
but we  consider the former space as subspace of $ L ^2 ( \mathbb R^n ) $ using extension by $ 0 $.)


It could then be considered somewhat surprising that, to our knowledge, $ \sigma ( \lambda ) $
has only been exhibited for radial scatterers.  That is, there has never been any
form of an actual assignment, via a numerical approximation, of $ \lambda \mapsto \sigma ( \lambda ) $.
At the time when asymptotic formulae for $ \sigma ( \lambda ) $ were mathematically
investigated (see \S \ref{s:weyl}) it is safe to say that such numerical computation were
out of reach. Here we benefit from major advances in computational power and, in particular,
from the recent high level multiphysics finite element software \href{https://freefem.org/}{FreeFEM} --
see \S \ref{s:num}.

The numerical results for a variety of two dimensional scatterers $ \mathscr O $ are shown in our figures.
The main conclusions are:
\begin{itemize}
  \item The Weyl asymptotics for $ \sigma ( \lambda ) $ given in \eqref{eq:Weyl2} provide an
        accurate approximation starting at $ 0 $ energy; this accuracy is particularly striking in the
        case of non-trapping geometries -- see Figure \ref{f:nontrap}. They also appear remarkably accurate in trapping geometries.

  \item Strong trapping immediately causes lack of monotonicity of $ \sigma ( \lambda ) $ which
        in accordance with \eqref{eq:BW} is related to the presence of resonances near the
        real axis (as reviewed in \S \ref{s:BW}) -- see top Figure \ref{f:trap}.

  \item Mild trapping, illustrated in the two bottom Figures \ref{f:trap}, does not seem to
        destroy monotonicity but there is a visible effect from scattering resonances at least for low
        frequencies.

  \item For star shaped obstacles the scattering phase is monotone \cite{Ral}. This monotonicity
        is not known for non-trapping obstacles even though \cite{pepo} provided full asymptotic
        expansion for $ \sigma ( \lambda ) $; numerical examples suggest that $\sigma(\lambda)$ may always
        be monotone for non-trapping obstacles -- see Figure \ref{f:nontrap}.
        More experimentation would, however, be required for a firm conjecture.

\end{itemize}

\subsection{Weyl law for $ \sigma ( \lambda ) $}
\label{s:weyl}

Possibly the most striking result about the counting function for the eigenvalues of
the Dirichlet Laplacian, $ \Delta_{\mathscr O },  $ on a \emph{bounded} domain $ \mathscr O  \subset \mathbb R^n $ is the Weyl law: with
\[ N ( \lambda ) :=  | \Spec ( - \Delta_{\mathcal O }) \cap [0, \lambda^2 ] | , \]
\begin{equation}
  \label{eq:Weyl} \begin{split}  N ( \lambda ) &  =
    \frac{ \omega_n \vol( \mathscr O ) } { ( 2 \pi)^{n}}\lambda^n -
    \frac{  \omega_{n-1}  \vol ( \partial \mathscr O )} {4 ( 2 \pi)^{n-1}} \lambda^{n-1}
    + o ( \lambda^{n-1} ) , \end{split} \end{equation}
where $ \omega_n := \vol ( B_{\mathbb R^n } ( 0 , 1 ) ) $.
It was conjectured by Weyl in 1913 and established by Ivrii in 1980 (see \cite{sava} and \cite{ivre} for the history of this problem) under the assumptions that $ \partial \mathscr O $ is smooth
and the set of periodic orbits has measure zero (a generically valid fact expected to be true
for all $ \mathscr O $ with smooth boundaries).

The trace formula \eqref{eq:BK} shows that $ \sigma ( \lambda ) $ is
the exact analogue of $ N ( \lambda ) $ since
$ \tr f ( \Delta_{\mathscr O } ) = \int_0^\infty f ( \lambda^2 ) N' ( \lambda )  d \lambda $.
It is then natural to ask if \eqref{eq:Weyl} holds for $ \sigma ( \lambda ) $, with the understanding
that, in agreement with \eqref{eq:BK} we now consider renormalized volume of $ \mathbb R^n \setminus
  \mathscr O $. Hence the natural analogue of \eqref{eq:Weyl} is given by
\begin{equation}
  \label{eq:WeylS}
  \sigma ( \lambda )   =
  - \frac{ \omega_n \vol( \mathscr O ) } { ( 2 \pi)^{n}}\lambda^n -
  \frac{  \omega_{n-1}  \vol ( \partial \mathscr O )} {4 ( 2 \pi)^{n-1}} \lambda^{n-1}
  + o ( \lambda^{n-1} ) .  \end{equation}
The difficulty in obtaining \eqref{eq:WeylS}
stems from the fact that
classical Tauberian theorems used for \eqref{eq:Weyl} use monotonicity of $ N ( \lambda ) $.
As we will see in \S \ref{s:BW}, $ \sigma ( \lambda ) $ is \emph{not}, in general,
monotone.

However, for star-shaped obstacles $ \sigma' ( \lambda ) \leq 0 $ was established
by Helton--Ralston \cite{Ral} (see also \cite{kat}). This monotonicity allowed Jensen--Kato \cite{jeka} to obtain
the leading term in \eqref{eq:WeylS} in that case (the convex case was treated by
Buslaev \cite{bus}). For convex obstacles Majda--Ralston \cite{mara}
improved on \cite{jeka} by obtaining a three term asymptotic expansion of $ \sigma ( \lambda ) $.
Using advances in propagation of singularities for
obstacle problems (see \cite[Chapter 24]{H3} and references given there) Petkov--Popov \cite{pepo}
obtained a \emph{full} asymptotic expansion of $ \sigma ( \lambda )$ as $ \lambda \to \infty $.

The first proof of \eqref{eq:WeylS} for all obstacles (for which the conditions after \eqref{eq:Weyl} hold)
was given by Melrose \cite{Mel88} using his trace formula for scattering poles (see \cite[\S 3.10, \S
  3.13]{dizzy}). Since that formula holds only in odd dimension the same restriction was imposed.
This restriction was lifted using different methods by Robert \cite{rob}. (A proof in all dimensions following
Melrose's idea can be given using \cite{pz1}.)
In this historical account we only discussed the Dirichlet obstacle case. For more general perturbations
see, for instance, \cite{Ch0}.

Specialized to two dimensions, \eqref{eq:WeylS} becomes
\begin{equation}
  \label{eq:Weyl2}
  \sigma ( \lambda ) = - \frac{ | \mathscr O |}{4 \pi} \lambda^2 - \frac{ | \partial \mathscr O | }{ 4 \pi}
  \lambda + o ( \lambda ) .
\end{equation}
In the non-trapping case, in addition to further terms in~\eqref{eq:Weyl2}, there is an asymptotic formula for $ \sigma' ( \lambda ) $ \cite{pepo}. When a non-trapping $ \mathscr O $ has corners (i.e. has piecewise smooth, Lipschitz boundary) the following formula
is suggested by heat expansions for interior problems which can be found in~\cite{Ch:83,MaRo}:
\begin{equation}
  \label{eq:Weyl3}
  \sigma ( \lambda ) = - \frac{ | \mathscr O |}{4 \pi} \lambda^2 - \frac{ | \partial \mathscr O | }{ 4 \pi}
  \lambda +  \frac 1{ 24 } \sum_j \left( \frac {\theta_j } \pi - \frac \pi {\theta_j } \right) - \frac1{ 24 \pi}
  \int_{\partial \mathscr O } H ds + o ( 1 ),
\end{equation}
where $ \theta_j $ are the angles at the corners (measured from outside) and $ H $ is
the curvature (with the convention that $ H > 0$ for circles; we note that
if there are no corners and connected $ \mathscr O $, $ \int_{\partial \mathscr O } H ds = 2 \pi $). However, to our knowledge only the first asymptotic term of~\eqref{eq:Weyl3} is known rigorously in this case.

In the figures illustrating numerical results both asymptotic formulas are plotted against
the computed scattering phase and its derivative.  It is interesting to note that for most frequencies
$ \sigma' ( \lambda ) $ seems to agree with the asymptotic formula even in trapping cases.
This is similar to phenomena proved in the recent work of Lafontaine--Spence--Wunsch \cite{lasw}
and perhaps could be rigorously established by similar methods.

\begin{center}
  \begin{figure}
    \includegraphics[width=9cm]{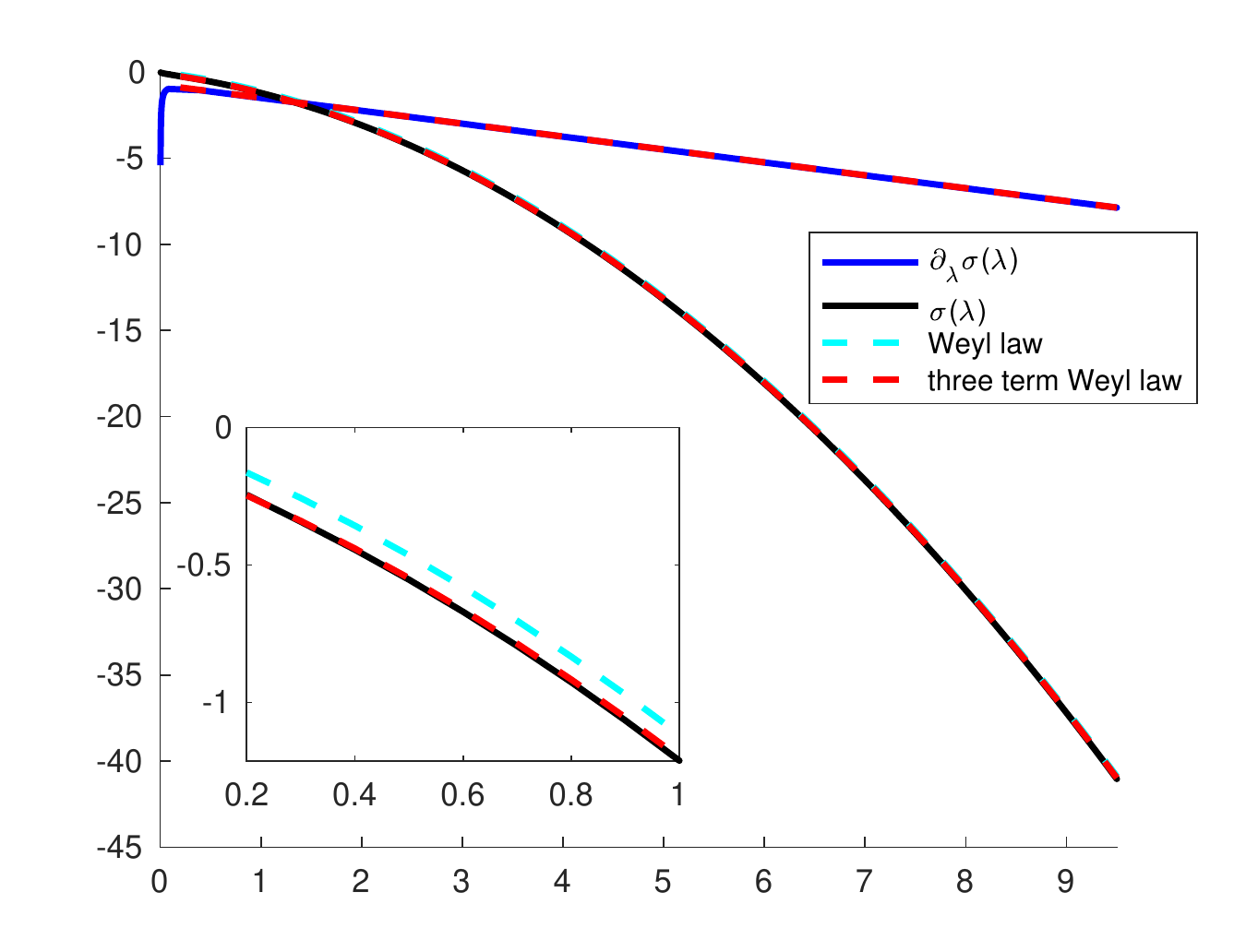}  \hspace{0.5cm} \includegraphics[width=5.5cm]{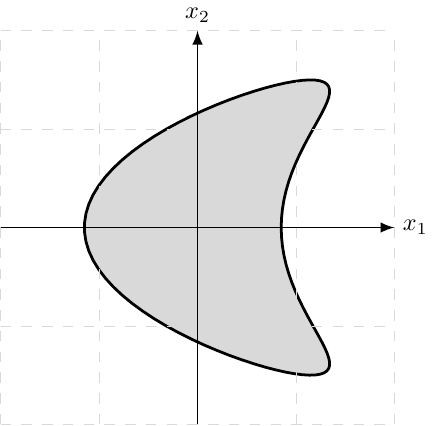}
    \includegraphics[width=9cm]{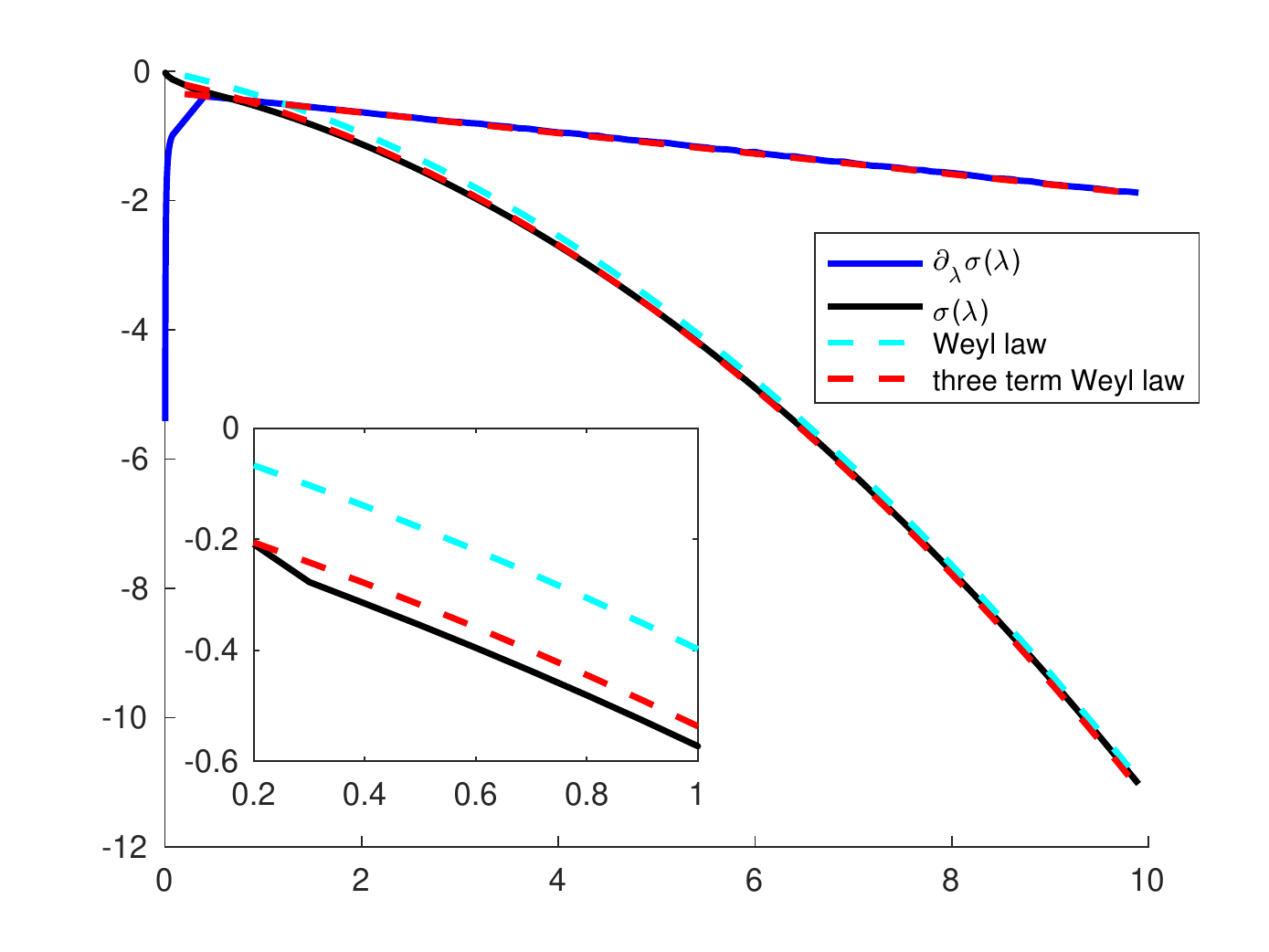}  \hspace{0.5cm} \includegraphics[width=5.5cm]{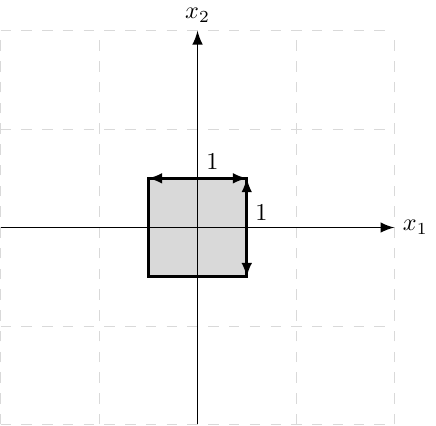}
    \includegraphics[width=9cm]{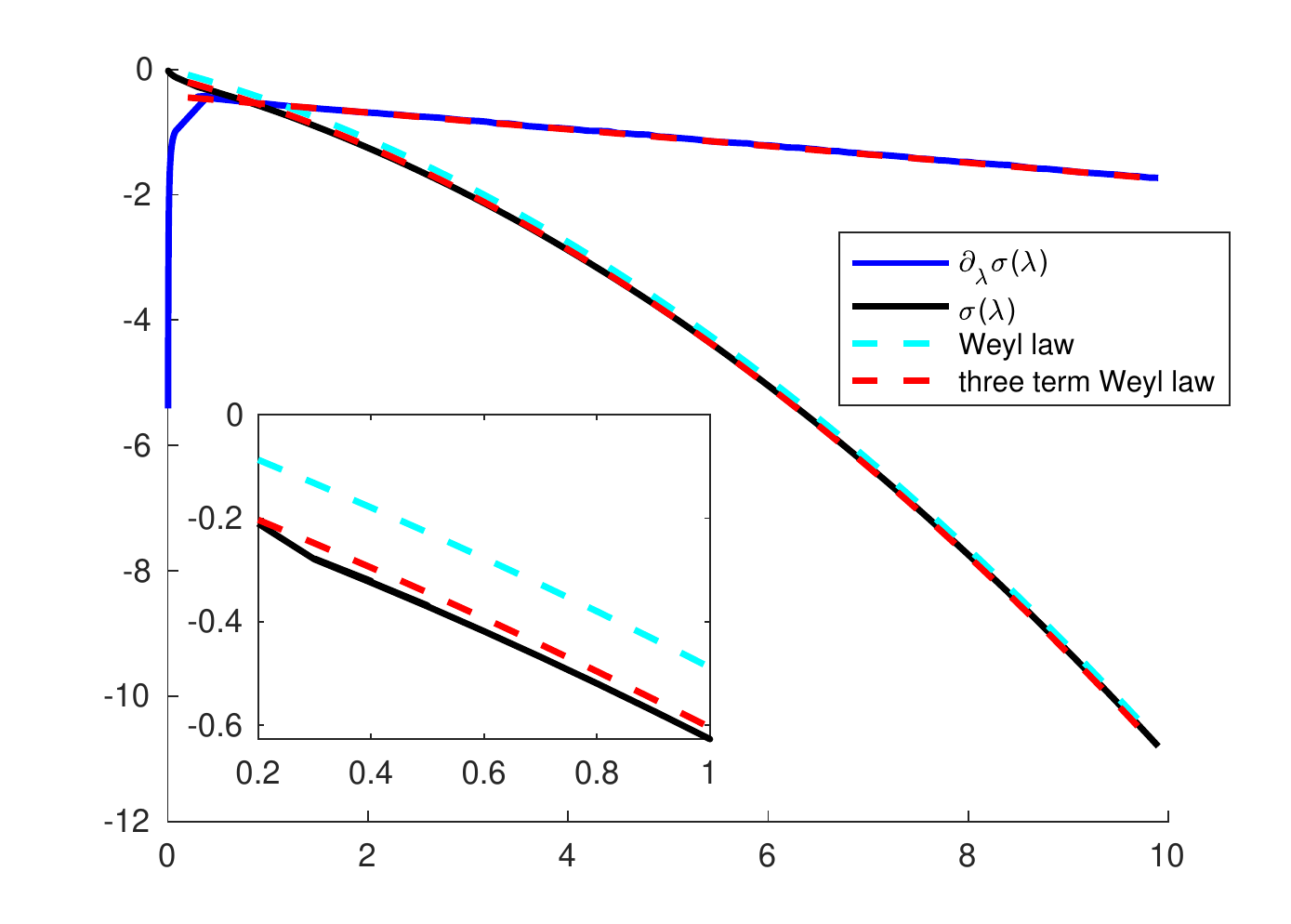}  \hspace{0.5cm} \includegraphics[width=5.5cm]{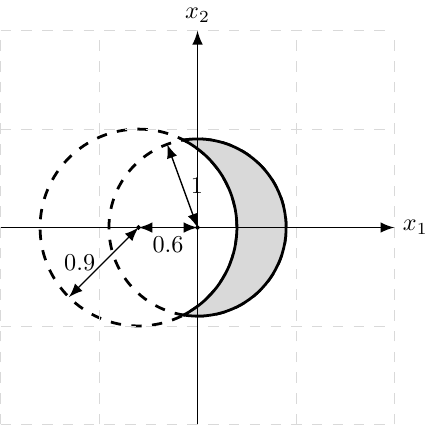}
    \caption{\label{f:nontrap}
      Scattering phase and the corresponding geometry: from top to bottom,
      a star-shaped obstacle, a star-shaped obstacle with corners, a non-trapping non-starshaped obstacle. We also indicate the comparisons with the Weyl law \eqref{eq:Weyl2} and
      the (conjectural) three term Weyl for obstacles with corners \eqref{eq:Weyl3}.}
  \end{figure}
\end{center}

\begin{center}
  \begin{figure}
 \includegraphics[width=8.6cm]{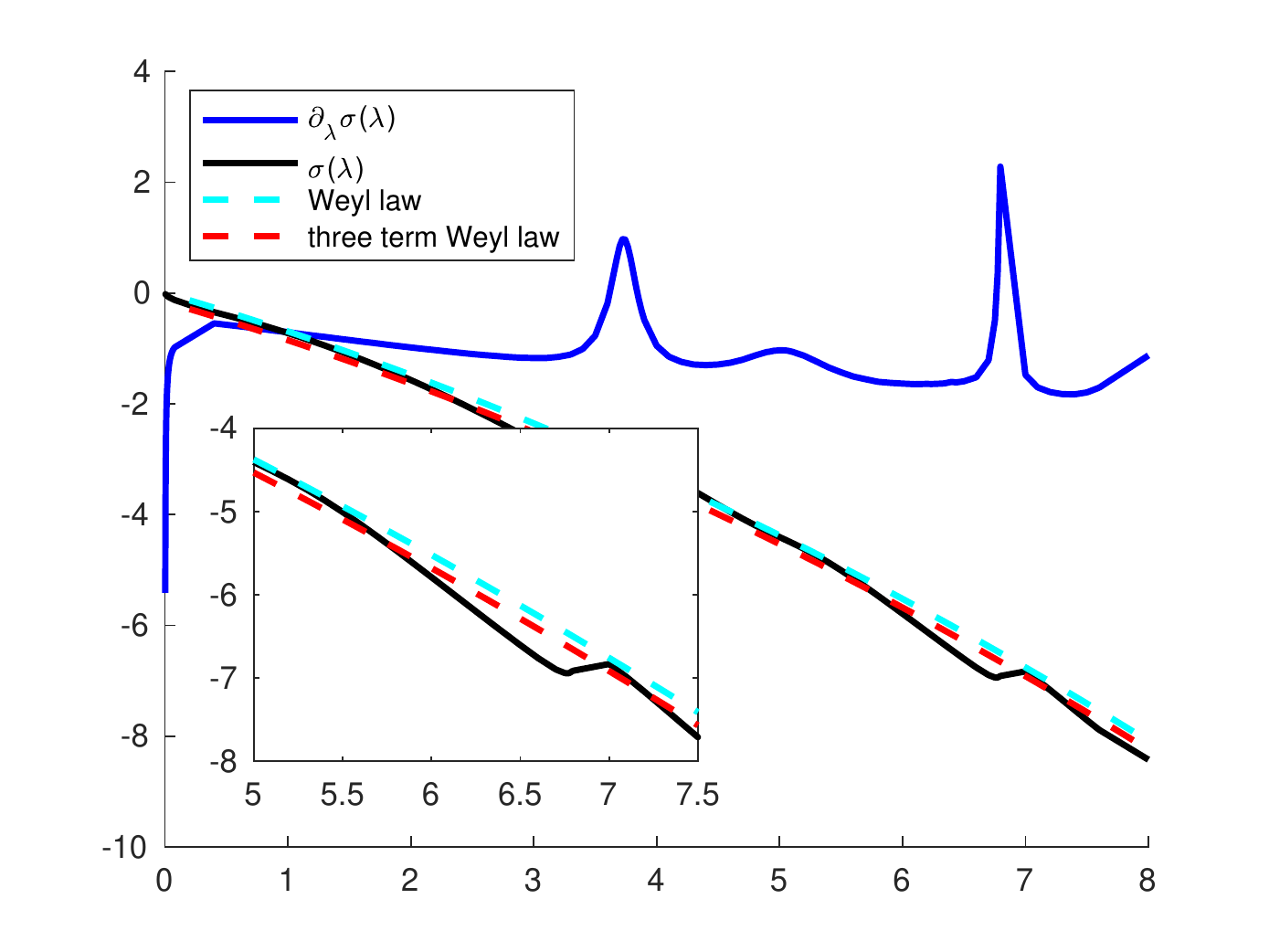}  \hspace{0.5cm} \includegraphics[width=5.25cm]{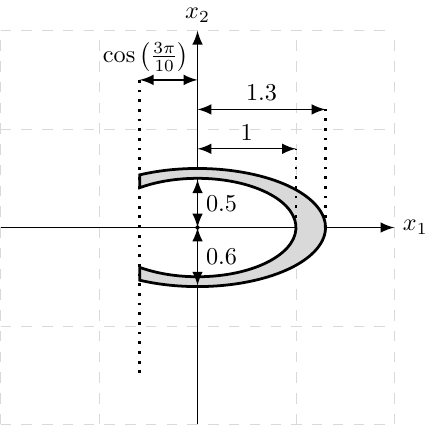}
    \includegraphics[width=8.75cm]{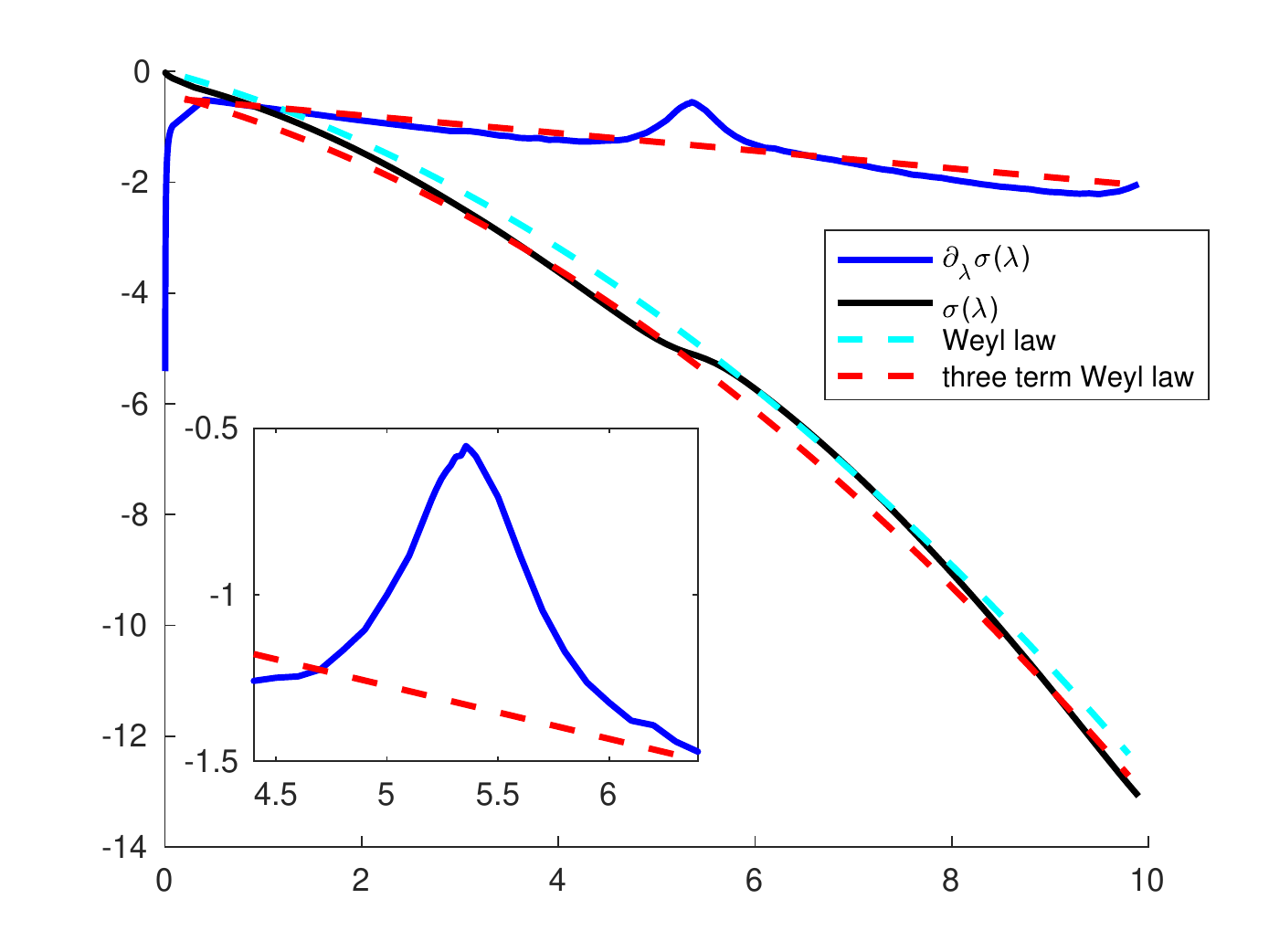}  \hspace{0.5cm} \includegraphics[width=5.25cm]{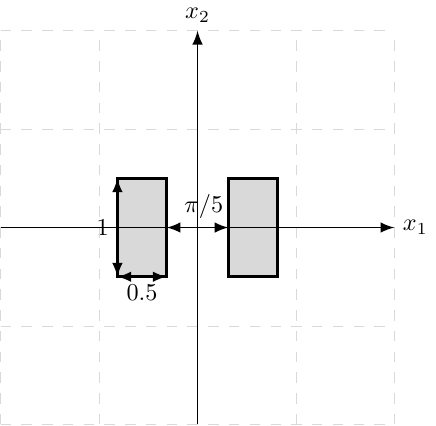}
    \includegraphics[width=8.75cm]{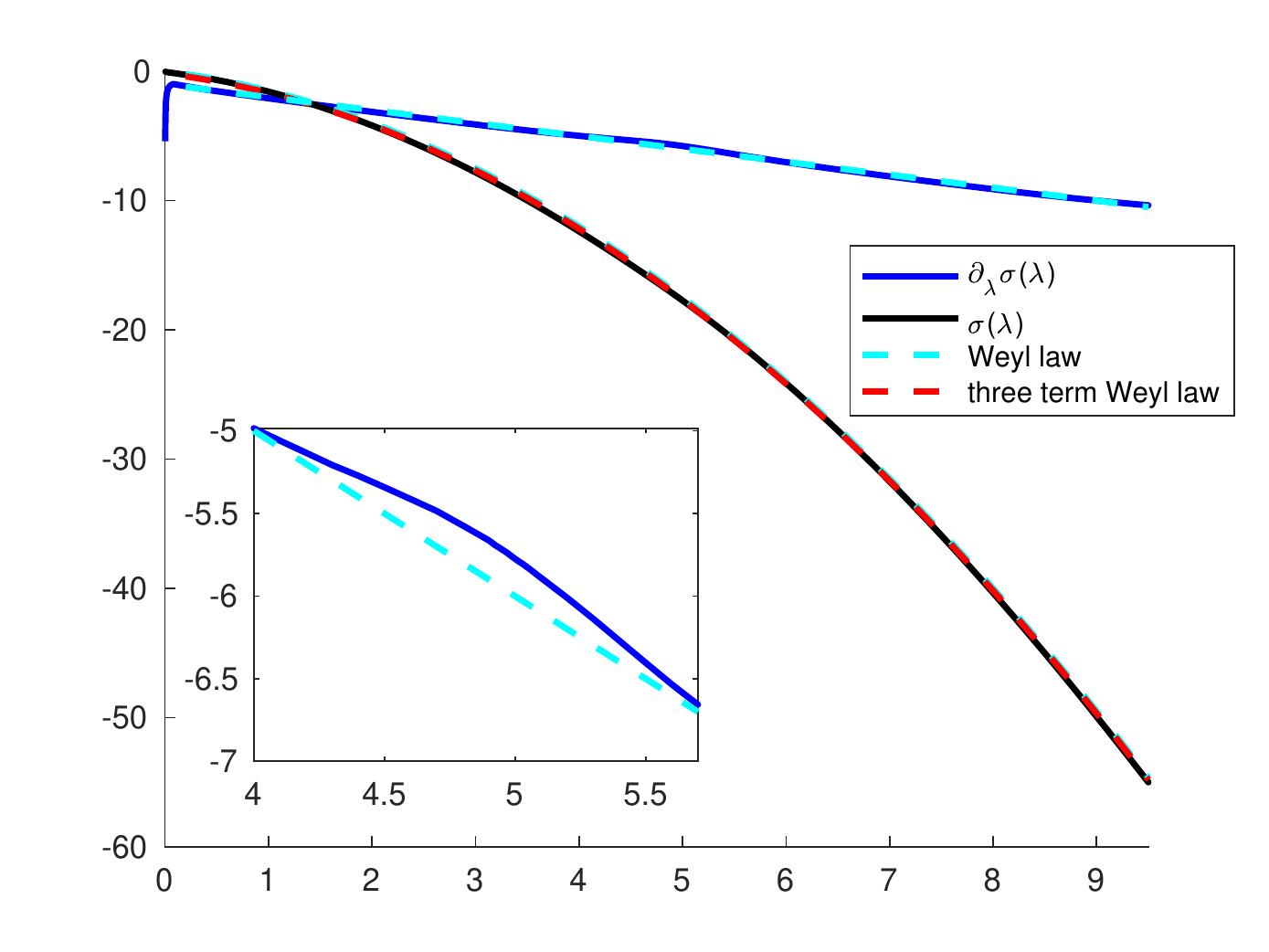}  \hspace{0.4cm} \includegraphics[width=5.25cm]{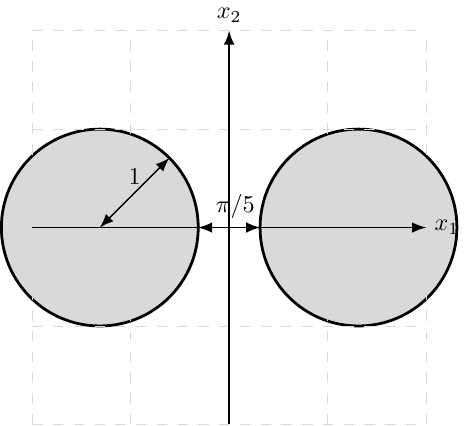}
    \caption{\label{f:trap} Scattering phase and the corresponding geometry:
      from top to bottom: strong trapping in a cavity, parabolic trapping from bouncing ball orbits, hyperbolic trapping in the form one closed orbit. In the case of strong trapping,
      we see numerical manifestations of \eqref{eq:BW}.
      For the two rectangles, we expect resonances
      with $ |\Im \lambda_j| \sim 1/|\lambda_j |$ so that \eqref{eq:BW} is inconclusive.
      In the case of two or more discs,  the resonances
      satisfy $ |\Im \lambda_j| > c $ (see \cite{vac} and references given there) and, as a result, at high
      energies their effect is weak.}
  \end{figure}

\end{center}

\subsection{Breit--Wigner approximation at high energies}
\label{s:BW}
Scattering resonances, which replace discrete spectral data for problems on unbounded domains,
can be defined (in obstacle scattering) as poles of the meromorphic continuation of $ S ( \lambda ) $
-- see \cite[\S 4.4]{dizzy}.
Since
$ S( \lambda ) $, $ \lambda > 0 $
captures observable phenomena, it is interesting to see how those (complex) poles manifest
themselves in its behaviour. The Breit--Wigner formula (see \cite[\S 2.2]{dizzy}) is one such way.
In high energy obstacle scattering it was proved by Petkov--Zworski \cite{pz1} and takes
the following form:
\begin{equation}
  \label{eq:BW}
  \sigma' ( \lambda )  = \sum_{ | \lambda_j - \lambda | < 1 }
  \frac 1 \pi \frac{| \Im \lambda_j | }{ |\lambda - \lambda_j |^2 } + \mathcal O (\lambda^{n-1} ) ,
\end{equation}
where $ \lambda_j $'s are the scattering resonances, that is the poles of $ S ( \lambda ) $.
From the point of view of the scattering asymptotics \eqref{eq:WeylS}
we note that the sign of the Breit--Wigner
terms (the sum of Lorentzians on the right in \eqref{eq:BW}) is opposite of the overall
trend. In particular, if there exist $ \lambda_j $'s with
$ |\Im \lambda_j| \ll (\Re \lambda_j)^{1-n} $, then $ \sigma' ( \lambda ) > 0 $ for
$ \lambda $ near $ \Re \lambda_j $. Strong trapping, such as that shown in Figure~\ref{f:trap} (top figure),
is known to produce resonances with $ \Im \lambda_j = \mathcal O ( |\lambda_j|^{-\infty } ) $
-- see \cite{StefDuke}, \cite{TaZ}. Consequently, whenever such strong trapping occurs
the scattering phase is \emph{not} monotone.

The strong and parabolic trapping examples in Figures \ref{f:trap} (top two figures) show the
presence of Lorentzians in $ \sigma' $ already at low energies. In the very weak
trapping illustrated in in the bottom Figure \ref{f:trap} there is some evidence of a low energy
resonance but the effect seems minimal.

\subsection{Low energy asymptotics}
The numerical methods used to compute $ \sigma' ( \lambda ) $  are
not effective at very low energies -- see \S \ref{s:num}. To obtain $ \sigma ( \lambda ) $
by integration we used low energy asymptotic formulae for $ \sigma' ( \lambda ) $.
There has been recent progress on this subject and it is natural to review it here.

The first result we are aware of was obtained by Hassell--Zelditch \cite{haze} (using
monotonicity of $ \sigma ( \lambda ) $ as a function of the obstacle \cite{Ral}) and
stated that $ \sigma ( \lambda ) \sim \frac{1}{ 2} \log \lambda $. That was a by-product of their work
on planar obstacles with the same scattering phase (an analogue of the isospectral problem).
This result was successively improved by McGillivray \cite{mcg}, Strohmaier--Waters \cite{StW}
and Christiansen--Datchev \cite{CD} and a more precise asymptotic formula is given by
\begin{equation}
  \label{eq:CD}
  \sigma' ( \lambda ) \sim - \frac{2}{\lambda} \frac{1}{ ( -2 \log 2 \lambda + C ( \mathscr O ) + 2 \gamma  )^2 + \pi^2 }
\end{equation}
with $ C( \mathscr O ) $ the logarithmic capacity of $ \mathscr O$ (see below) and
$ \gamma $ the Euler constant. One way to define $ C ( \mathscr O ) $ is to consider the
Green function of $ \mathscr O $:
\[ - \Delta G (x) = 0 , \ \ x \in \mathbb R^2 \setminus \mathscr O , \ \ \
  G ( x ) = 0 , \ \ x \in \partial \mathscr O , \ \ \ G ( x ) \sim \log |x| , \ \ |x| \to \infty , \]
Then
\[  G ( x ) = \log | x| -  C ( \mathscr O ) + o ( 1 ) , \ \ |x| \to \infty.  \]
We only used the leading term to enhance the numerics.

\begin{center}
  \begin{figure}
    \includegraphics[width=10cm]{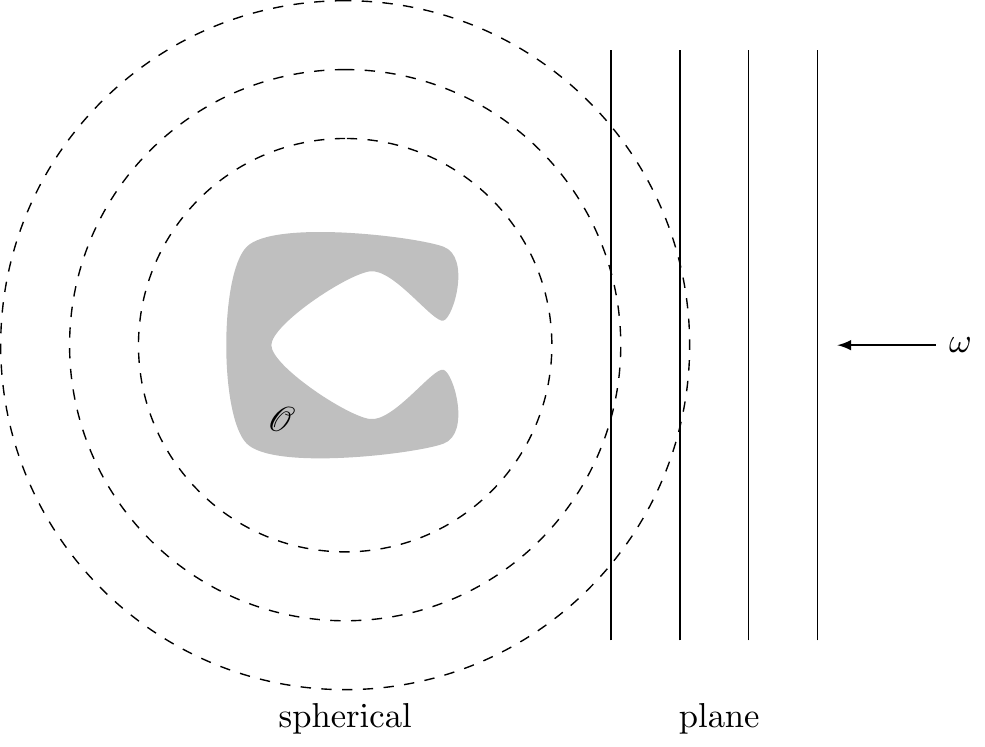}
    \caption{\label{f:scat} The waves used to define the scattering matrix}
  \end{figure}
\end{center}

\medskip
\noindent \textbf{Acknowledgements.} The authors are grateful to Euan Spence for helpful conversations at the beginning of the project.  JG was partially supported by EPSRC Early Career Fellowship EP/V001760/1 and Standard Grant EP/V051636/1, PM was partially supported by EPSRC grant EP/R005591/1, and MZ was partially supported by NSF grant DMS-1952939.

\section{A formula for the derivative of the scattering phase}
\label{s:form}

In order to compute $\sigma(\lambda)$ we recall a definition
of the scattering matrix  in dimension $n=2$ -- for
motivation and a detailed presentation see \cite[\S 3.7, \S 4.4]{dizzy}.

We start with perturbed plane waves -- see \eqref{e:defE} below.
For that we let $\omega\in \mathbb{S}^{1}$, $\lambda\in \mathbb{R}$ and define $u(\lambda,\cdot, \omega)\in C^\infty(\mathbb{R}^2)$ as the unique \emph{outgoing} solution to
\begin{equation}
  \label{eq:Helm}
  (-\Delta-\lambda^2)u=0\,\,\text{ in }\mathbb{R}^2\setminus \mathscr{O},\qquad u|_{\partial\mathscr{O}}= -e^{i\lambda \langle x,\omega\rangle}|_{\partial \mathscr{O}}.
\end{equation}
(We note that, to streamline notation, the convention is slightly different than in \cite{dizzy}.)
Here, by \emph{outgoing}, we mean that there is $b(\lambda,\cdot, \omega)\in C^\infty(\mathbb{S}^{1})$ such that
\begin{equation}
  \label{e:defU}
  u(\lambda, x,\omega)= e^{-\frac{\pi i}{4}}\sqrt{{2\pi}/({\lambda |x|})}e^{i\lambda |x|}b(\lambda, {x}/{|x|},\omega)+O(|x|^{-3/2}).
\end{equation}
We then define
\begin{equation}
  \label{e:defE}
  e(\lambda, x,\omega):= e^{i\lambda\langle x,\omega\rangle}+u(\lambda,x,\omega).
\end{equation}

The scattering matrix, $S(\lambda):L^2(\mathbb{S}^1)\to L^2(\mathbb{S}^1)$, is then given by $S(\lambda):=I+A(\lambda)$, where $A(\lambda)$ is an integral operator
defined as
\begin{equation}
  \label{eq:ala}
  A ( \lambda ) f ( \theta ) := \int_{\mathbb S^1}
  A(\lambda, \theta,\omega) f ( \omega ) d \omega,  \ \
  A(\lambda, \theta,\omega) :=b(\lambda,\theta,\omega).
\end{equation}
The scattering matrix $ S ( \lambda ) $ is unitary and extends meromorphically to
the Riemann surface of $ \log \lambda $.

It will be useful when computing the scattering phase to rewrite the integral kernel
$ A ( \lambda, \theta, \omega ) $ as an integral over $\partial\mathscr{O}$:
\begin{lemm}
  Let $\nu$ denote unit normal to $\partial\mathscr{O}$ pointing out of $\mathscr{O}$. Then, in the
  notation of \eqref{e:defE},  we have (with $ ds( x ) $ the line measure on $ \partial \mathscr O $ or
  $\partial B ( 0 , r) $)
  \begin{equation}
    \label{eq:Ala}
    A ( \lambda, \theta, \omega ) = \frac{1}{4\pi i}\int_{\partial\mathscr O} e^{-i\lambda\langle x,\theta\rangle}\partial_\nu e(\lambda,x,\omega)ds(x) .
  \end{equation}
\end{lemm}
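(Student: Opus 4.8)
The plan is to start from the Green representation formula for the outgoing solution $u(\lambda,\cdot,\omega)$ in the exterior domain $\mathbb{R}^2\setminus\mathscr{O}$ and then read off its far field by inserting the known asymptotics of the free outgoing Green function. Write $G_0(\lambda,x,y)=\tfrac{i}{4}H_0^{(1)}(\lambda|x-y|)$ for the outgoing fundamental solution of $-\Delta-\lambda^2$ in $\mathbb{R}^2$. Since $u$ solves $(-\Delta-\lambda^2)u=0$ in $\mathbb{R}^2\setminus\mathscr{O}$ and is outgoing, applying Green's identity on the annular region $B(0,R)\setminus\overline{\mathscr{O}}$ and letting $R\to\infty$ gives
\[ u(x)=\int_{\partial\mathscr{O}}\bigl(u(y)\,\partial_{\nu_y}G_0(\lambda,x,y)-G_0(\lambda,x,y)\,\partial_{\nu_y}u(y)\bigr)\,ds(y),\qquad x\in\mathbb{R}^2\setminus\overline{\mathscr{O}}. \]
The contribution of the sphere $\partial B(0,R)$ vanishes in the limit because $u$ and $G_0$ both satisfy the Sommerfeld radiation condition, so that the Wronskian-type boundary term there is $O(R^{-1})$.

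Next I would extract the far field. Using $|x-y|=|x|-\langle x/|x|,y\rangle+O(|x|^{-1})$ together with $H_0^{(1)}(z)=\sqrt{2/(\pi z)}\,e^{i(z-\pi/4)}(1+O(z^{-1}))$, one finds, for $x/|x|=\theta$ and $y$ bounded,
\[ G_0(\lambda,x,y)= C(x)\,e^{-i\lambda\langle y,\theta\rangle}+O(|x|^{-3/2}),\qquad C(x):=\tfrac{i}{4}\sqrt{2/(\pi\lambda|x|)}\,e^{-i\pi/4}e^{i\lambda|x|}, \]
and, to leading order, $\partial_{\nu_y}G_0\approx C(x)\,\partial_{\nu_y}e^{-i\lambda\langle y,\theta\rangle}$, since only the phase is differentiated in $y$. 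Substituting into the representation formula and comparing with the normalization \eqref{e:defU} of $b(\lambda,\theta,\omega)$, the $x$-dependent constants combine to $C(x)/\bigl(e^{-i\pi/4}\sqrt{2\pi/(\lambda|x|)}\,e^{i\lambda|x|}\bigr)=\tfrac{i}{4\pi}$, yielding the symmetric boundary form
\[ A(\lambda,\theta,\omega)=b(\lambda,\theta,\omega)=\frac{i}{4\pi}\int_{\partial\mathscr{O}}\bigl(u(y)\,\partial_{\nu_y}e^{-i\lambda\langle y,\theta\rangle}-e^{-i\lambda\langle y,\theta\rangle}\,\partial_{\nu_y}u(y)\bigr)\,ds(y). \]

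Finally I would replace $u$ by $e$ via $u=e-e^{i\lambda\langle\cdot,\omega\rangle}$. This splits the integral into an $e$-part and a plane-wave cross term. The cross term $\int_{\partial\mathscr{O}}\bigl(e^{i\lambda\langle y,\omega\rangle}\partial_\nu e^{-i\lambda\langle y,\theta\rangle}-e^{-i\lambda\langle y,\theta\rangle}\partial_\nu e^{i\lambda\langle y,\omega\rangle}\bigr)\,ds$ vanishes by Green's identity on the bounded domain $\mathscr{O}$, because both plane waves solve $(-\Delta-\lambda^2)\cdot=0$ there, so the interior integrand $f\Delta g-g\Delta f$ is identically zero. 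In the remaining $e$-part the term $e\,\partial_\nu e^{-i\lambda\langle y,\theta\rangle}$ drops out since the boundary condition in \eqref{eq:Helm} forces $e|_{\partial\mathscr{O}}=0$. What survives is $-\tfrac{i}{4\pi}\int_{\partial\mathscr{O}}e^{-i\lambda\langle y,\theta\rangle}\partial_\nu e\,ds=\tfrac{1}{4\pi i}\int_{\partial\mathscr{O}}e^{-i\lambda\langle y,\theta\rangle}\partial_\nu e\,ds$, which is exactly \eqref{eq:Ala}.

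The routine parts are the two Green identities and the constant bookkeeping. The step that needs care, and which I expect to be the main obstacle in making the argument fully rigorous, is the passage to the far field: justifying that the $\partial B(0,R)$ boundary term really vanishes and that the Hankel asymptotics may be inserted uniformly for $y\in\partial\mathscr{O}$, so that the limit of the representation formula reproduces the coefficient $b(\lambda,\theta,\omega)$ defined through \eqref{e:defU} rather than a different normalization. With piecewise smooth $\partial\mathscr{O}$ one should also confirm that the integrations by parts are valid up to the corners; since the integrands remain bounded and the corner contributions cancel, this introduces no genuine difficulty.
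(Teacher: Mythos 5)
Your argument is correct and reaches \eqref{eq:Ala} with the right constant, but it takes a genuinely different route from the paper. You go through the free outgoing fundamental solution $G_0=\tfrac i4 H_0^{(1)}(\lambda|x-y|)$: first the exterior Green representation of $u$ (with the large-sphere term killed by the radiation condition), then the far-field expansion of $G_0$ via Hankel asymptotics to identify $b(\lambda,\theta,\omega)$ as a two-term boundary integral, and finally a clean-up step in which the plane-wave cross term is removed by Green's identity \emph{inside} $\mathscr O$ and the term $e\,\partial_\nu e^{-i\lambda\langle y,\theta\rangle}$ drops because $e|_{\partial\mathscr O}=0$. The paper instead never introduces $G_0$: it applies Green's identity on $B(0,R)\setminus\mathscr O$ directly to the pair $e(\lambda,\cdot,\omega)$ and the plane wave $e^{-i\lambda\langle x,\theta\rangle}$ (so the boundary term on $\partial\mathscr O$ is already the single integral in \eqref{eq:Ala}, again using $e|_{\partial\mathscr O}=0$), and then evaluates the resulting integral over $\partial B(0,R)$ by the explicit stationary phase formula \eqref{eq:statm}, which picks out $4\pi i\,b(\lambda,\theta,\omega)$ as $R\to\infty$. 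The trade-off: your route leans on standard potential-theoretic facts (Sommerfeld decay of the Wronskian at infinity, uniform differentiable Hankel asymptotics for $y$ in a bounded set), which are exactly the points you flag as needing care; the paper's route concentrates all the analysis at infinity into the single stationary phase lemma and avoids both the fundamental solution and the interior Green identity. Your constant bookkeeping checks out: $C(x)$ divided by the normalization $e^{-i\pi/4}\sqrt{2\pi/(\lambda|x|)}\,e^{i\lambda|x|}$ of \eqref{e:defU} is indeed $i/(4\pi)$, and the final sign flip from $e|_{\partial\mathscr O}=0$ gives $1/(4\pi i)$. Either proof is acceptable.
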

\begin{proof}
  Green's formula shows that, with $ e (x)   := e(\lambda,x,\omega) $ and $\mathscr O \subset B( 0 , R )$
  \begin{equation}
    \label{e:boundaryPair}
    \begin{aligned}
      0 & =
      \int_{B(0,R)\setminus\mathscr{O}}\left(
      [(-\Delta-\lambda^2)e(x)](e^{-i\lambda\langle x,\theta\rangle}) -
      e(x)[ (-\Delta-\lambda^2)e^{-i\lambda\langle x,\theta\rangle})] \right) dx
      \\
        & =\int_{\partial\mathscr{O}}e^{-i\lambda\langle x,\theta\rangle}\partial_\nu e ( x ) ds(x)
      -
      \int_{\partial B(0,R)} \left( \partial_r e ( x ) e^{-i\lambda\langle x,\theta\rangle}-e( x) \partial_r [e^{-i\lambda\langle x,\theta\rangle}]\right) d s(x) .
    \end{aligned}
  \end{equation}
  To compute the last term in~\eqref{e:boundaryPair}, we use the formulae~\eqref{e:defU} and~\eqref{e:defE} together with the stationary phase method
  (see \cite[Theorem  {3.38}]{dizzy}): for $ a \in C^\infty ( \mathbb S^1 ) $,
  \begin{equation}
    \label{eq:statm}
    \int _{\partial B(0,R)}a({x}/{|x|})e^{-i\lambda \langle x,\theta\rangle}ds(x)=\sqrt{2\pi R/ \lambda }(e^{{-\frac{i\pi}{4}}}a(-\theta)e^{i\lambda R}+e^{\frac{i\pi}{4}}a(\theta)e^{-i\lambda R}) +\mathcal O(R^{-\frac12}).
  \end{equation}
  By applying \eqref{eq:statm} when $\theta\neq \omega$,
  and the $ x \mapsto - x $ symmetry when $\omega=\theta$, we obtain
  $ \int_{\partial B(0,R)} \langle {x}/{|x|},\omega + \theta \rangle  e^{i\lambda \langle x,\omega-\theta\rangle}ds(x)= \mathcal O ( R^{-\frac12} ) $.
  This and \eqref{e:defE} give,
  with $ u( x)   := u ( \lambda, x , \omega ) $,
  \[ \begin{split}
      & \int_{\partial B(0,R)} \left(\partial_re ( x)  e^{-i\lambda\langle x,\theta\rangle}-e ( x)  \partial_r[ e^{-i\lambda\langle x,\theta\rangle}]\right) ds(x) = \\
      & \ \ \   \int_{\partial B(0,R)}  (  \partial_r u  ( x)
      + i \lambda \langle x /|x| , \theta \rangle u ( x )  ) )e^{ - i \lambda \langle x, \theta \rangle }  ds ( x )
      + \mathcal O ( R^{-\frac12} ) .
    \end{split} \]
  In the notation of  \eqref{e:defU}, we put $ B := e^{ - \pi i /4 } \sqrt {2 \pi/ \lambda } b ( \lambda, x/|x|, \omega ) $ and then apply \eqref{eq:statm}
  to see that this is expression is equal to
  \[  e^{ i \lambda R } R^{-\frac12 } \int_{\partial B(0,R)}
    (i \lambda  + i \lambda \langle x/|x|, \theta \rangle ) B
    e^{ - i \lambda \langle x, \theta \rangle } d s ( x ) + \mathcal O ( R^{-\frac12})
    = 4 \pi i b ( \lambda , \theta , \omega ) + \mathcal O ( R^{-\frac12}).
  \]
  Combined with \eqref{e:boundaryPair} and \eqref{eq:ala} this completes the proof of
  \eqref{eq:Ala} by taking $ R \to \infty $.
\end{proof}

\noindent
{\bf Remarks.} 1. For evaluating the traces in Lemma \ref{l:tr} numerically we note that,
using a positive parametrizaton $  [ 0 , L ) \to\partial \mathscr O$, $ s \mapsto x = x ( s ) $,
$ | \dot x| = 1$, $ \nu ( s ) = ( \dot x_2 ( s ) , - \dot x_1 ( s ) ) $ ($ \nu $ is the outward normal),
\begin{equation}
  \label{eq:perp}
  \begin{gathered} \partial_\nu ( e^{  i\lambda \langle x , \omega \rangle } ) =   i \lambda \langle
    \dot x , \omega^\perp \rangle e^{  i\lambda \langle x , \omega \rangle } ,\\
    \mathbb S^1 \ni \omega =  ( \cos t , \sin t  ) , \ \ \
    \omega^\perp := ( - \sin t, \cos t ) , \ \   t \in [ 0 , 2 \pi ).
  \end{gathered}
\end{equation}

\noindent
2. We recall the following symmetry of $ e ( \lambda, x , \omega ) $ \cite[Theorem 4.20]{dizzy}:
\[  \overline{ e ( \lambda, x, \omega ) } = e ( - \lambda, x, \omega ) . \]

Next, we calculate a formula for $\sigma'(\lambda)$ in terms of $e(\lambda, x,\omega)$.
The definitions give
\begin{equation}
  \label{eq:sigp}
  \sigma'(\lambda)=\frac{1}{2\pi i} \tr S(\lambda)^*\partial_\lambda S(\lambda)= \frac{1}{2\pi i}\tr \partial_\lambda A(\lambda) +\frac{1}{2\pi i}  { \mathrm{ tr} } A(\lambda)^*\partial_\lambda A(\lambda).
\end{equation}

We start with the first term on the right hand side of \eqref{eq:sigp}:
\begin{lemm}
  \label{l:tr}
  We have
  \begin{equation}
    \label{eq:trdA}
    \tr \partial_{\lambda}A(\lambda)=\frac{1}{4\pi } \int_{\mathbb{S}^1}\int_{\partial\mathscr{O}} e^{-i\lambda \langle x,\omega\rangle} G ( \lambda, x, \omega ) ds(x)d\omega ,
  \end{equation}
  {where}, in the notation of \eqref{e:defE},
  \begin{equation}
    \label{eq:defG}
    \begin{gathered}
      G(\lambda, x, \omega):= - \langle x,\omega \rangle \partial_{\nu} u(\lambda, x, \omega) +\partial_{\nu}v(\lambda, x, \omega),  \\
      (-\Delta-\lambda^2)v(\lambda, x,\omega)=-2i\lambda u(\lambda, x,\omega),  \ \ x \in \RR^2\setminus \mathscr O, \\ v(\lambda, x, \omega)|_{\partial\mathscr O}=-\langle x,\omega \rangle e^{i\lambda \langle x,\omega \rangle}|_{\partial \mathscr O}.
    \end{gathered}
  \end{equation}
\end{lemm}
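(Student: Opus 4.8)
The plan is to express the trace as an integral of the kernel of $\partial_\lambda A(\lambda)$ along the diagonal and then to differentiate the boundary representation \eqref{eq:Ala}. Since $A(\lambda)$ has the smooth kernel $A(\lambda,\theta,\omega)=b(\lambda,\theta,\omega)$ on $\mathbb S^1\times\mathbb S^1$, so does $\partial_\lambda A(\lambda)$; hence it is trace class and $\tr\partial_\lambda A(\lambda)=\int_{\mathbb S^1}\partial_\lambda A(\lambda,\omega,\omega)\,d\omega$, where $\partial_\lambda A(\lambda,\omega,\omega)$ is obtained by differentiating \eqref{eq:Ala} in $\lambda$ and then setting $\theta=\omega$. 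Thus the task reduces to identifying the diagonal value $\partial_\lambda A(\lambda,\omega,\omega)$.

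First I would differentiate \eqref{eq:Ala}, writing $e=e^{i\lambda\langle x,\omega\rangle}+u$ as in \eqref{e:defE}. The $\lambda$-derivative hits two factors: the oscillatory factor $e^{-i\lambda\langle x,\theta\rangle}$, producing $-i\langle x,\theta\rangle$, and $\partial_\nu e$, producing $\partial_\nu\partial_\lambda e$. Setting $\theta=\omega$ and using $\partial_\nu e^{i\lambda\langle x,\omega\rangle}=i\lambda\langle\nu,\omega\rangle e^{i\lambda\langle x,\omega\rangle}$ (the computation behind \eqref{eq:perp}), a direct expansion shows that the two terms proportional to $\lambda\langle x,\omega\rangle\langle\nu,\omega\rangle$ cancel, leaving the integrand
\[
e^{-i\lambda\langle x,\omega\rangle}\bigl(-i\langle x,\omega\rangle\,\partial_\nu u+\partial_\nu\partial_\lambda u+i\langle\nu,\omega\rangle e^{i\lambda\langle x,\omega\rangle}\bigr).
\]

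The heart of the argument is the identity $\partial_\lambda u=iv$, equivalently $\partial_\nu\partial_\lambda u=i\,\partial_\nu v$. To prove it I would set $w:=v+i\partial_\lambda u$ and check, using the defining problem \eqref{eq:Helm} for $u$ differentiated in $\lambda$ and the defining problem \eqref{eq:defG} for $v$, that $(-\Delta-\lambda^2)w=0$ in $\mathbb R^2\setminus\mathscr O$ with $w|_{\partial\mathscr O}=0$; since both $v$ and $\partial_\lambda u$ are outgoing, $w$ is an outgoing solution of the homogeneous exterior Dirichlet problem, hence $w\equiv0$ by uniqueness of outgoing solutions at real $\lambda>0$. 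Substituting $\partial_\nu\partial_\lambda u=i\,\partial_\nu v$ turns the integrand above into $i\bigl(-\langle x,\omega\rangle\partial_\nu u+\partial_\nu v\bigr)+i\langle\nu,\omega\rangle e^{i\lambda\langle x,\omega\rangle}=iG(\lambda,x,\omega)+i\langle\nu,\omega\rangle e^{i\lambda\langle x,\omega\rangle}$, with $G$ as in \eqref{eq:defG}. Dividing by the prefactor $4\pi i$ gives $\partial_\lambda A(\lambda,\omega,\omega)=\frac1{4\pi}\int_{\partial\mathscr O}e^{-i\lambda\langle x,\omega\rangle}G\,ds+\frac1{4\pi}\int_{\partial\mathscr O}\langle\nu,\omega\rangle\,ds$.

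Finally I would integrate in $\omega$ over $\mathbb S^1$. The first term yields exactly \eqref{eq:trdA}, while the spurious second term vanishes because $\int_{\mathbb S^1}\langle\nu,\omega\rangle\,d\omega=\langle\nu,\int_{\mathbb S^1}\omega\,d\omega\rangle=0$, the average of a unit vector over the circle being zero. I expect the main obstacle to be the rigorous justification of the identity $\partial_\lambda u=iv$: this requires knowing that $u(\lambda,\cdot,\omega)$ is differentiable in $\lambda$ with $\partial_\lambda u$ again an outgoing solution smooth up to $\partial\mathscr O$ (analyticity of the exterior resolvent and of the outgoing solution operator), and it relies on uniqueness of outgoing solutions (Rellich's theorem) for the homogeneous problem. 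The remaining points -- differentiation under the boundary integral, interchange of $\partial_\lambda$ and $\partial_\nu$, and the trace-class identity for smoothing operators -- are routine given the smoothness of the kernel.
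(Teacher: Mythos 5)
Your proposal is correct and follows essentially the same route as the paper: differentiate the boundary representation \eqref{eq:Ala}, restrict to the diagonal $\theta=\omega$, use the identity $\partial_\lambda u = iv$ (which the paper states as $\partial_\lambda e = i\langle x,\omega\rangle e^{i\lambda\langle x,\omega\rangle}+iv$ without the uniqueness argument you supply), observe the cancellation of the $\lambda\langle x,\omega\rangle\langle\nu,\omega\rangle$ terms, and discard the leftover term (the paper kills $\int_{\partial\mathscr O}\langle\dot x,\omega^\perp\rangle\,ds=0$ over the closed curve, while you equivalently average $\omega$ over $\mathbb S^1$). Your extra care in justifying $\partial_\lambda u=iv$ via Rellich uniqueness is a welcome addition rather than a deviation.
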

\begin{proof}
  The integral kernel of $\partial_\lambda A(\lambda)$
  is given by
  \begin{equation}
    \label{eq:kerAl}
    \partial_\lambda A(\lambda, \theta,\omega)= \frac{1}{4\pi i } \int_{\partial\mathscr{O}} \left(
    \partial_\lambda [ e^{-i\lambda \langle x,\theta\rangle} ] \partial_\nu e(\lambda,x,\omega)+ e^{-i\lambda \langle x,\theta\rangle}  \partial_\nu \partial_\lambda e(\lambda,x,\omega)] \right) ds(x).
  \end{equation}
  From \eqref{e:defE} we see that
  $ \partial_{\lambda} e( \lambda, x, \omega ) = i \langle x, \omega \rangle e^{ i \lambda \langle x, \omega \rangle} + i  v (\lambda, x, \omega) $,
  where $ v $ is defined in the statement of the lemma. Hence, in the notation of \eqref{eq:perp},
  and with
  $ e := e ( \lambda, x ,\omega ) $, the integrand in \eqref{eq:kerAl} for $ \theta = \omega $ is
  given by
  $$ 
    i \langle \dot x, \omega^\perp \rangle + i (- \langle x , \omega \rangle \partial_\nu u ( \lambda, x , \omega ) + \partial_\nu v ( \lambda,  x, \omega ) ) e^{-i\lambda \langle x,\omega \rangle}.
  $$
  This gives \eqref{eq:trdA} since
  $ \int_{\partial \mathscr O} \langle \dot x , \omega^\perp \rangle ds = 0 $.
\end{proof}

We now move to the second term in \eqref{eq:sigp}:
\begin{lemm}
  \label{l:tr2}
  We have
  \begin{equation}
    \label{eq:trAtdA}
    \tr A(\lambda)^*\partial_\lambda A(\lambda)  = \frac{1}{16 \pi^2 }
    \int_{\mathbb S^1 } \int_{\mathbb{S}^1} H ( \lambda, \omega , \theta )
    F ( \lambda , \omega, \theta ) d\omega d \theta ,
  \end{equation}
  where in the notation of Lemma \ref{l:tr},
  \begin{equation*}
    \begin{split}
      H & :=
      \int_{\partial \mathscr O } e^{ i \lambda \langle x, \theta \rangle}
      \left(  - i \lambda \langle \dot x, \omega^\perp \rangle e^{ - i \lambda \langle
        x , \omega \rangle } + \overline{ \partial_\nu u (\lambda, x, \omega ) }  \right) ds ( x ), \\
      F  & :=
      \int_{\partial \mathscr O } e^{ - i \lambda \langle y, \theta \rangle }
      \left[ ( \langle \dot y , \omega^\perp \rangle ( \lambda \langle y , \theta - \omega \rangle
      + i )e^{ i \lambda \langle y, \omega \rangle } - i \langle y , \theta \rangle \partial_\nu
      u ( \lambda, y, \omega ) + i \partial_\nu v ( \lambda, y , \omega ) \right] ds(y).
    \end{split}
  \end{equation*}
\end{lemm}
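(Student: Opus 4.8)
The plan is to reduce everything to Schwartz kernels. Writing $\tr T=\int_{\mathbb{S}^1}T(\theta,\theta)\,d\theta$ and recalling that the kernel of $A(\lambda)^*$ at $(\theta,\psi)$ is $\overline{A(\lambda,\psi,\theta)}$, the composition rule for integral operators gives
\[
\tr A(\lambda)^*\partial_\lambda A(\lambda)=\int_{\mathbb{S}^1}\int_{\mathbb{S}^1}\overline{A(\lambda,\omega,\theta)}\,\partial_\lambda A(\lambda,\omega,\theta)\,d\omega\,d\theta .
\]
Thus it suffices to show that the conjugated factor equals $\tfrac{i}{4\pi}H$ and the differentiated factor equals $\tfrac{1}{4\pi i}F$ (with the two angular variables carried in a suitable order); the constant $\tfrac{i}{4\pi}\cdot\tfrac{1}{4\pi i}=\tfrac{1}{16\pi^2}$ is then exactly the prefactor in \eqref{eq:trAtdA}.

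For the $H$ factor I would conjugate the kernel formula \eqref{eq:Ala}, using $\overline{1/(4\pi i)}=i/(4\pi)$ together with the splitting $\partial_\nu e=i\lambda\langle\dot x,\omega^\perp\rangle e^{i\lambda\langle x,\omega\rangle}+\partial_\nu u$ obtained from \eqref{e:defE} and \eqref{eq:perp}. Conjugation turns the plane-wave term into $-i\lambda\langle\dot x,\omega^\perp\rangle e^{-i\lambda\langle x,\omega\rangle}$ and leaves $\overline{\partial_\nu u}$, producing precisely the integrand defining $H$, so that $\overline{A(\lambda,\omega,\theta)}=\tfrac{i}{4\pi}H(\lambda,\theta,\omega)$.

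For the $F$ factor I would start from the kernel \eqref{eq:kerAl} of $\partial_\lambda A(\lambda)$ and insert $\partial_\lambda e=i\langle x,\omega\rangle e^{i\lambda\langle x,\omega\rangle}+iv$ (established inside the proof of Lemma \ref{l:tr}), along with $\partial_\lambda[e^{-i\lambda\langle x,\theta\rangle}]=-i\langle x,\theta\rangle e^{-i\lambda\langle x,\theta\rangle}$ and $\partial_\nu[\langle x,\omega\rangle e^{i\lambda\langle x,\omega\rangle}]=\langle\dot x,\omega^\perp\rangle(1+i\lambda\langle x,\omega\rangle)e^{i\lambda\langle x,\omega\rangle}$ (which follows from $\partial_\nu\langle x,\omega\rangle=\langle\nu,\omega\rangle=\langle\dot x,\omega^\perp\rangle$). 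After factoring out $e^{-i\lambda\langle x,\theta\rangle}$, the three plane-wave contributions should collapse into $\langle\dot x,\omega^\perp\rangle(\lambda\langle x,\theta-\omega\rangle+i)e^{i\lambda\langle x,\omega\rangle}$, while the remaining terms give $-i\langle x,\theta\rangle\partial_\nu u+i\partial_\nu v$; this is exactly the integrand of $F$, so that $\partial_\lambda A(\lambda,\omega,\theta)=\tfrac{1}{4\pi i}F(\lambda,\theta,\omega)$.

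The only genuine obstacle here is bookkeeping, not analysis: one must keep track of which of $\omega,\theta$ sits in the oscillatory exponential and which sits in the $u,v$ terms, since the two correspondences carry the angular arguments transposed. Substituting them into the displayed double integral yields $\tfrac{1}{16\pi^2}\int\int H(\lambda,\theta,\omega)F(\lambda,\theta,\omega)\,d\omega\,d\theta$, and relabeling $\omega\leftrightarrow\theta$ (harmless since both range over $\mathbb{S}^1$) gives \eqref{eq:trAtdA}. The single place where I would be most careful is the algebraic cancellation producing the coefficient $\lambda\langle x,\theta-\omega\rangle+i$, as this is where a sign slip would be easiest to make.
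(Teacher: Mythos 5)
Your proposal is correct and follows essentially the same route as the paper: both express the trace via the adjoint kernel $\overline{A(\lambda,\omega,\theta)}$ paired with $\partial_\lambda A(\lambda,\omega,\theta)$, then substitute $\partial_\lambda e = i\langle x,\omega\rangle e^{i\lambda\langle x,\omega\rangle}+iv$ and the decomposition $e=e^{i\lambda\langle x,\omega\rangle}+u$ together with \eqref{eq:perp} to identify the two boundary integrals as $H$ and $F$. The paper leaves the final algebra (including the cancellation giving $\lambda\langle y,\theta-\omega\rangle+i$) implicit, and your computation of that coefficient checks out.
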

\begin{proof}
  The integral kernel of $A(\lambda)^*$ is given by
  $$
    A^*(\lambda,\omega,\theta)= - \frac{1}{4\pi i }\int_{\partial \mathscr O}e^{i\lambda \langle x,\theta\rangle}\partial_\nu \overline{e(\lambda,x,\omega)}ds(x),
  $$
  and hence $ \tr A(\lambda )^* \partial_\lambda A ( \lambda ) $
  is given
  as an integral over $ \partial \mathscr O_x \times \partial \mathscr O_y \times
    \mathbb S^1_{ {\theta}} \times \mathbb S^1_{{\omega}} $ of
  \[\begin{split} & \tfrac{1}{16 \pi^2 } e^{ i \lambda \langle x-y, \theta  \rangle }
      \overline{ \partial_\nu e (\lambda, x, \omega ) }
      \left( - i \langle y , \theta \rangle \partial_\nu  e( \lambda , y, \omega ) +
      \partial_\nu \partial_\lambda e ( \lambda, y, \omega ) \right) .
    \end{split} \]
  Using $ \partial_{\lambda} e( \lambda, x, \omega ) = i \langle x, \omega \rangle e^{ i \lambda \langle x, \omega \rangle} + i  v (\lambda, x, \omega)$ and the definition of $ e ( \lambda, x , \omega )$
  completes the proof.
\end{proof}

\noindent
{\bf Remark.} The integral over $ \theta $ could be eliminated using Bessel functions.
That however introduces factors
$ J_0 ( \lambda|x-y| ) $ and $\langle y , x - y \rangle J_1 ( \lambda |x-y|)/|x-y| $ and
destroys the product structure which only requires separate integration in $ x $ and $ y$.
Hence, it is not numerically advantageous.

\section{Analytic solution for the disc}

In order to validate our numerical scheme, the scheme was tested agains the analytic solution for \(\calO\) given by the unit disk. We record in this section the formulae for both $\sigma(\lambda)$ and $u(\lambda,x,\omega)$ in this case.
\subsection{The scattering phase for the unit disk}

To compute the scattering phase for the disk, we use polar coordinates and separation of variables to find the scattering matrix. In particular, in polar coordinates $(r,\theta)$, a solution to $(-\Delta-\lambda^2)u=0$ with $u|_{\partial B(0,1)}$ with $u(r,\theta)=\sum_n e^{in\theta}u_n(r)$ satisfies
$$
  \Big(-\partial_r^2-\frac{1}{r}\partial_ru+\frac{n^2}{r^2}-\lambda^2\Big)u_n(r)=0,\qquad u_n(1)=0
$$
and hence
\begin{equation}
  \label{e:separate}
  u_n(r)= A_n\Big(-\frac{H_{|n|}^{(2)}(\lambda )}{H_{|n|}^{(1)}(\lambda )}H_{|n|}^{(1)}(\lambda r)+H_{|n|}^{(2)}(\lambda r)\Big).
\end{equation}
Recall~\cite[\S 10.17(i)]{dlmf} that for $\lambda,r>0$, $n\geq 0$, we have
\begin{gather*}
  H_{n}^{(1)}(\lambda r)=\Big(\frac{2}{\pi \lambda r}\Big)^{1/2}e^{i(\lambda r-\frac{1}{2}n\pi -\frac{1}{4}\pi)}+O(r^{-3/2}),\\ H_n^{(2)}(\lambda r)=\Big(\frac{2}{\pi \lambda r}\Big)^{1/2}e^{-i(\lambda r-\frac{1}{2}n\pi -\frac{1}{4}\pi)}+O(r^{-3/2}).
\end{gather*}
Thus, $H_{|n|}^{(1)}(\lambda r)$ is outgoing and $H_{|n|}^{(2)}(\lambda r)$ is incoming and hence this implies that $\sin(n\theta)$ ($n\neq 0$) and $\cos(n\theta)$ are eigenfunctions of $S(\lambda)$ with eigenvalue
$$
  \mu_n:=(-1)^{n+1}\frac{H_{|n|}^{(2)}(\lambda )}{H_{|n|}^{(1)}(\lambda )}.
$$
In particular, using the Wronskian relation~\cite[(10.5.5)]{dlmf} in the last line, we obtain
\begin{align}
  \label{eq:analytical_dsigma}
  \sigma'(\lambda) & =\Big(\frac{1}{2\pi i}\log \det S(\lambda)\Big)'                                                                                                               \nonumber \\
                   & =\frac{R}{2\pi i}\sum_{n=-\infty}^\infty \frac{(H_{|n|}^{(2)})'(\lambda )}{H_{|n|}^{(2)}(\lambda )}-\frac{(H_{|n|}^{(1)})'(\lambda )}{H_{|n|}^{(1)}(\lambda )} \nonumber \\
                   & =-\frac{2}{\pi^2 \lambda }\sum_{n=-\infty}^\infty \frac{1}{H_{|n|}^{(1)}(\lambda )H_{|n|}^{(2)}(\lambda )}.
\end{align}

\noindent
{\bf Remark.} Note that we do not write $\sigma(\lambda)$ directly since this would involve making a choice of branch for the logarithm. We instead use the $\sigma(0)=0$ to make this choice when integrating $\sigma'(\lambda)$.

\subsection{The scattering amplitude for the unit disk}


The the incoming portion of $e(\lambda)$ in~\eqref{e:defE} is given by the incoming portion of $e^{i\lambda \langle x,\omega\rangle}$.  Using the Jacobi–Anger expansion, with $x=r(\cos \theta,\sin \theta)$  we have
\begin{align*}
  e^{i\lambda \langle x,\omega\rangle} & = e^{i \lambda r \left(\cos \theta \cos \omega + \sin \theta \sin \omega\right)} = e^{i \lambda r \cos\left( \theta -\omega \right)} \\
                                       & = \sum_{n=0}^{\infty} \delta_n i^n \big(H^{(1)}_n(\lambda r)+H_n^{(2)}(\lambda r)\big) \cos( n(\theta -\omega) ),
\end{align*}
where \(\delta_0=\frac{1}{2}\) and \(\delta_n=1\) for \(n>0\). Thus, from~\eqref{e:separate} we have
$$
  e(\lambda,r\theta,\omega)=\sum_{n=0}^{\infty} \delta_n i^n \big(-\frac{H_n^{(2)}(\lambda)}{H_n^{(1)}(\lambda)}H^{(1)}_n(\lambda r)+H_n^{(2)}(\lambda r)\big) \cos( n(\theta -\omega) ),
$$
and hence
\begin{equation}\label{eq:analytical_solution}
  u(\lambda,r\theta,\omega)= \sum_{n=0}^\infty \delta_n i^n \Big(1-\frac{H_n^{(2)}(\lambda)}{H_n^{(1)}(\lambda)}\Big)H^{(1)}_n(\lambda r)\cos\left(n(\theta-\omega)\right).
\end{equation}
We can now easily
%
deduce explicit expression for \(v\), \(\partial_{\nu}u\) and \(\partial_{\nu}v\).

\section{Numerical scheme}
\label{s:num}
In this section we describe the numerical scheme used to compute the
scattering phase.

\subsection{Setup}
To compute~\eqref{eq:trdA} and~\eqref{eq:trAtdA}, we use the trapezoidal rule to approximate the 1-d integrals along the angles \(\theta\) and \(\omega\): for \(N>0\), \(\omega_l= 2\pi l/N\) for \(l=0\cdots N-1\), and using the \(2\pi\)-periodicity, we use the following approximations
\begin{align*}
  \tr \partial_\lambda A & \approx
  \frac{1}{ 4 \pi} \dfrac{2\pi}{N} \sum_{l=0}^{N-1} \int_{\partial \mathscr{O}}
  e^{ - \lambda \langle \omega_l , x \rangle} G ( \lambda, x , \omega_l )
  d s(x),
\end{align*}
where $ G $ is given in \eqref{eq:defG}.
For the second term we benefit from the factorization in which we only compute
two integrals over the boundary:
\begin{align*}
  \tr A^* \partial_{ \lambda } A & \approx
  \frac{1}{ 16 \pi^2 }
  \left(\dfrac{2\pi}{N}\right)^2\, \sum_{l=0}^{N-1} \sum_{p=0}^{N-1}
  H ( \lambda, \omega_l, \theta_p )F ( \lambda, \omega_l, \theta_p ) ,
\end{align*}
where $ H $ and $ F $ are given in Lemma~\ref{l:tr2}. It remains compute the normal derivatives of \(u(\lambda,\cdot, \omega)\) and \(v(\lambda,\cdot, \omega)\) for \(\omega \in (\omega_l)_{l=0}^{N-1}\).

To approximate \(u\) and \(v\), we first need to reformulate both problems on a bounded domain in \(\mathbb{R}^2\setminus\overline{\mathscr{O}}\). We use the method of \emph{Perfectly Matched Layers} (PML) (introduced in~\cite{Ber} for electromagnetic waves) to do this. More precisely, we use a radial PML~\cite{CoMo}: consider a disk $B_{R_{\mathrm{PML}}}$ with \(R_{\mathrm{PML}}>R_{\mathrm{DOM}}\) such that $\overline{\mathscr{O}}\subsetneq B_{R_{\mathrm{DOM}}}$, we reformulate both~\eqref{eq:Helm} and~\eqref{eq:defG}  using polar coordinates $(r,\theta)$ in $B_{R_{\mathrm{PML}}}$, and we apply a complex scaling $\hat{r}=r +\frac{i}{\lambda}\int_0^r \gamma(s)ds$ where \(\gamma\) is an increasing function defined on $[0,R_{\mathrm{PML}})$ and equal to zero in $[0,R_{\mathrm{DOM}})$. Several choices can be made for $\gamma$, we choose \(\gamma(r):=1/(R_{\mathrm{PML}}-r)\) for \(r\in [R_{\mathrm{DOM}},R_{\mathrm{PML}})\) as advocated in~\cite{Ber+}. We denote \(\mathbf{J}_{\mathrm{PML}}\) the Jacobian of the transformation from the Cartesian coordinates to the complexified Cartesian coordinates.

The equations for $ u $ and $v$, \eqref{eq:Helm} and~\eqref{eq:defG} are solved with the Galerkin method using Lagrange finite elements; i.e.\ we solve these equations in a finite-dimensional subspace \(V_h\subset H^1(B_{R_{\mathrm{PML}}}\setminus \overline{\mathscr{O}})\) formed by piecewise-polynomial functions on a mesh, and we denote \(h\) the mesh element size (see~\cite{ErGu} for more information): we find \(u_h, v_h \in V_h\) such that \(u_h|_{\partial\mathscr{O}}= -\mathcal{I}_h (e^{i\lambda \langle x,\omega\rangle})|_{\partial \mathscr{O}}\), \(v_h|_{\partial\mathscr{O}}= -\mathcal{I}_h(\lambda \langle x,\omega\rangle e^{i\lambda \langle x,\omega\rangle})|_{\partial \mathscr{O}}\) where \(\mathcal{I}_h:C^0(\overline{B_{R_{\mathrm{PML}}}\setminus \overline{\mathscr{O}}})\to V_h\) is the Lagrange interpolation operator, \(u_h|_{\partial B_{\mathrm{PML}}}=v_h|_{\partial B_{\mathrm{PML}}}=0\),
\begin{align*}
  a(u_h,w_h)=0\text{ for all }w_h \in V_{h,0}, \text{ and } a(v_h,w_h)=b_{u_h} (w_h)\text{ for all } w_h \in V_{h,0},
\end{align*}
where \(V_{h,0}\) is the subspace of functions in \(V_h\) whose value on \(\partial \mathscr{O} \cup \partial B_{\mathrm{PML}}\) is zero,
\begin{align*}
  a(u,w)     & = \int_{B_{R_{\mathrm{DOM}}} \setminus \overline{\mathscr{O}}} (\nabla u \cdot \nabla w - \lambda^2 u w)dxdy                                                                                                                     \\
             & + \int_{B_{R_{\mathrm{PML}}} \setminus \overline{B_{R_{\mathrm{DOM}}}}} (\mathbf{J}_{\mathrm{PML}}^{-T}\nabla u \cdot \mathbf{J}_{\mathrm{PML}}^{-T}\nabla w - \lambda^2 u w) \lvert \det  \mathbf{J}_{\mathrm{PML}}\rvert dxdy, \\
  b_{u_h}(w) & = - 2 i \lambda \int_{B_{R_{\mathrm{PML}}}} u_h w \lvert \det  \mathbf{J}_{\mathrm{PML}}\rvert dxdy.
\end{align*}

In our numerical experiments, the approximation space \(V_h\) is spanned by  $\mathbb{P}_2$ Lagrange elements, i.e.\ continuous piecewise quadratic functions. To bound the error from discretization independently of \(\lambda\) when solving~\eqref{eq:Helm} and~\eqref{eq:defG}, we need \(h^{2p}\lambda^{2p+1}=h^{4}\lambda^{5}\) bounded~\cite{DuWu}, where \(h\) is the mesh size and \(p\) is the degree of the finite element functions. To satisfy this condition, we set the number of points per wavelength to \(\mu \times (1+\lambda^{1/4})\), where \(\mu\) is a constant. Differentiating \(u_h\) and \(v_h\) to take the Neumann trace on \(\partial \mathscr{O}\), we obtain $\mathbb{P}_1$ Lagrange elements on the discretization of \(\partial \mathscr{O}\), which can then be used to compute \(G ( \lambda, x , \omega_l )\), \(H ( \lambda, \omega_l, \theta_p )\) and \(F ( \lambda, \omega_l, \theta_p )\).

Note that these approximations depend on \(\lambda\) and the angle \(w_l\) in the Dirichlet conditions, and thus require solving~\eqref{eq:Helm} and~\eqref{eq:defG} for \(N\) different angles and hence $N$ different right-hand sides, for a given frequency \(\lambda\). Thus, for a given \(\lambda\), we factorize the matrix stemming from the discretization (note that it is the same for both \(u_h\) and \(v_h\)), and we use it to solve the discretized problems with several right-hand sides at the same time to improve efficiency. The numerical computations were carried out with FreeFEM~\cite{hec}. More precisely, we used its interface with PETSc~\cite{bal} to solve linear systems with MUMPS~\cite{Am1,Am2}.

\noindent
{\bf Remark.}
Since we only need the Neumann traces of $u$ and $v$ to compute the scattering phase, it is quite natural to want to reformulate both problems~\eqref{eq:Helm} and~\eqref{eq:defG} using Boundary Integral Equations (BIE). While~\eqref{eq:Helm} can easily be reformulated with a standard BIE, the presence of a right-hand side in~\eqref{eq:defG} makes it less convenient to usual boundary integral formulations. Nevertheless, it should be possible to represent \(v\) differentiating Green's third identity (which we can use to represent \(u\)), but it would imply non-standard boundary integral operators. Thus, we preferred to use more standard tools such as PML.

\subsection{Convergence}

When \(\mathscr{O}\) is a disk, we use the analytical expression from \eqref{eq:analytical_dsigma}, with a truncated sum using \(\lvert n\rvert\leq 5\lambda\), to compute the relative error on \(\sigma'\). In Table~\ref{table:cvg_disk_lambda_mu}, from left to right, the frequency \(\lambda\) is increasing. The tables at the top have \(R_{\mathrm{PML}}-R_{\mathrm{DOM}}=0.25\), while tables at the bottom keep a number of mesh cells in the PML region constant, \(R_{\mathrm{PML}}-R_{\mathrm{DOM}}=5h\).

For a fixed \(R_{\mathrm{PML}}-R_{\mathrm{DOM}}\) and \(\lambda\) increasing (tables at the top in Table~\ref{table:cvg_disk_lambda_mu}), the error is decreasing, which is consistent with~\cite{gal}, which states that the error on \(u\) should decrease in this case. We also observed that keeping a fixed number of mesh cells in the PML region (tables at the bottom in Table~\ref{table:cvg_disk_lambda_mu}) is enough to have the same level of precision as with a fixed PML region. This is due to the particular choice of \(\gamma\), and we do not observe this behaviour with other usual complex scaling (taking \(\gamma\) as a linear or quadratic function for example). The advantage is that, in this case, \(R_{\mathrm{PML}}-R_{\mathrm{DOM}}\) decreases so that the computational cost is reduced compared to keeping \(R_{\mathrm{PML}}-R_{\mathrm{DOM}}\) constant.

Table~\ref{table:cvg_disk_N} gives the relative error on \(\sigma'\) with \(N\) increasing, \(\mu=20\), \(R_{\mathrm{DOM}}=2\) and \(R_{\mathrm{PML}}-R_{\mathrm{DOM}}=5h\). We observe that we need to take \(N\) large enough to converge to the same level of error as in Table~\ref{table:cvg_disk_lambda_mu}, and \(N\) needs to be larger for larger \(\lambda\): \(N=30\) for \(\lambda=10\) and \(N=50\) for \(\lambda=10\). This is consistent with the fact that \(u\) and \(v\) are more and more oscillatory when \(\lambda\) increases, and we observed numerically that taking \(N\sim \lambda\) is sufficient to keep the error bounded independently of \(\lambda\).

\begin{figure}
  \includegraphics[width=14cm]{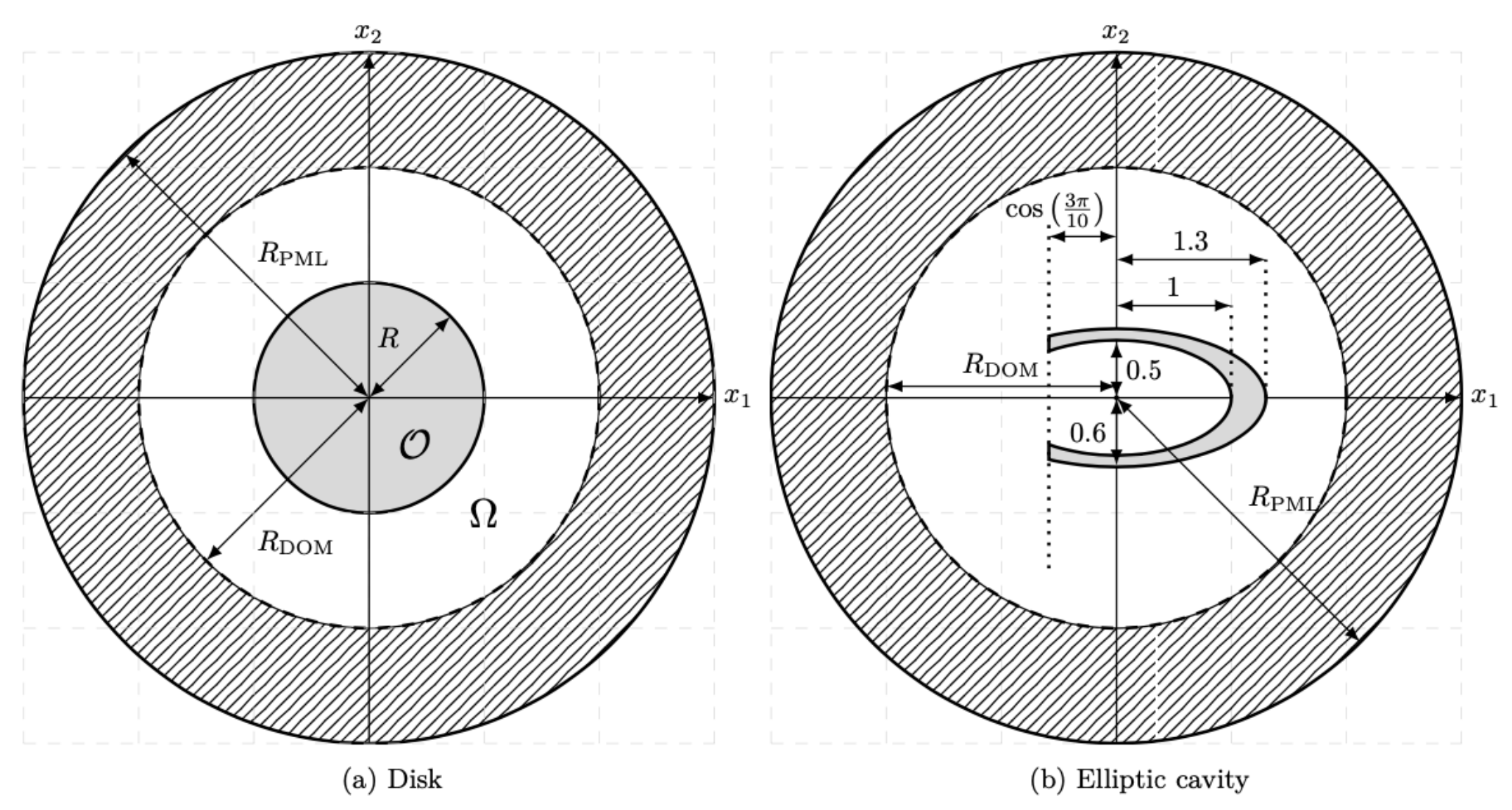}
  \caption{Considered geometries with their PML}
\end{figure}
\begin{table}
  \tabcolsep=0.10cm
  \begin{subtable}[T]{0.49\textwidth}
    \centering
    \begin{tabular}{cccc}
      \hline
      $\mu$ & Relative error on \(\sigma'\) \\
      \hline
      1     & 0.1519                        \\
      5     & 0.0120                        \\
      10    & 0.0038                        \\
      15    & 0.0023                        \\
      20    & 0.0015                        \\
      \hline
    \end{tabular}
    \caption*{\(\lambda=10\), \(R_{\mathrm{PML}}-R_{\mathrm{DOM}}=0.25\)}
  \end{subtable}
  \begin{subtable}[T]{0.49\textwidth}
    \centering
    \begin{tabular}{cccc}
      \hline
      $\mu$ & Relative error on \(\sigma'\) \\
      \hline
      1     & 0.0258                        \\
      5     & 0.0097                        \\
      10    & 0.0030                        \\
      15    & 0.0016                        \\
      20    & 0.0008                        \\
      \hline
    \end{tabular}
    \caption*{\(\lambda=20\), \(R_{\mathrm{PML}}-R_{\mathrm{DOM}}=0.25\)}
  \end{subtable}\par\bigskip
  \begin{subtable}[T]{0.49\textwidth}
    \centering
    \begin{tabular}{cccc}
      \hline
      $\mu$ & Relative error on \(\sigma'\) \\
      \hline
      1     & 0.0779                        \\
      5     & 0.0108                        \\
      10    & 0.0038                        \\
      15    & 0.0021                        \\
      20    & 0.0015                        \\
      \hline
    \end{tabular}
    \caption*{\(\lambda=10\), \(R_{\mathrm{PML}}-R_{\mathrm{DOM}}=5h\)}
  \end{subtable}
  \begin{subtable}[T]{0.49\textwidth}
    \centering
    \begin{tabular}{cccc}
      $\mu$ & Relative error on \(\sigma'\) \\
      \hline
      1     & 0.0334                        \\
      5     & 0.0096                        \\
      10    & 0.0030                        \\
      15    & 0.0015                        \\
      20    & 0.0008                        \\
      \hline
    \end{tabular}
    \caption*{\(\lambda=20\), \(R_{\mathrm{PML}}-R_{\mathrm{DOM}}=5h\)}
  \end{subtable}
  \caption{Relative error on \(\sigma'\) for a disk with \(R_{\mathrm{DOM}}=2\) and \(N=100\).}
  \label{table:cvg_disk_lambda_mu}
\end{table}

\begin{table}
  \begin{subtable}[T]{0.49\textwidth}
    \centering
    \begin{tabular}{cccc}
      \hline
      $\mu$ & \(N\) & Relative error on  \(\sigma'\) \\
      \hline
      20    & 20    & 0.0594                         \\
      20    & 25    & 0.0025                         \\
      20    & 30    & 0.0015                         \\
      20    & 35    & 0.0015                         \\
      20    & 40    & 0.0015                         \\
      20    & 45    & 0.0015                         \\
      20    & 50    & 0.0015                         \\
      20    & 55    & 0.0015                         \\
      20    & 60    & 0.0015                         \\
      \hline
    \end{tabular}
    \caption*{\(\lambda=10\)}
  \end{subtable}
  \begin{subtable}[T]{0.49\textwidth}
    \centering
    \begin{tabular}{cccc}
      \hline
      $\mu$ & \(N\) & Relative error on  \(\sigma'\) \\
      \hline
      20    & 20    & 0.0618                         \\
      20    & 25    & 0.0310                         \\
      20    & 30    & 0.0309                         \\
      20    & 35    & 0.0311                         \\
      20    & 40    & 0.0307                         \\
      20    & 45    & 0.0031                         \\
      20    & 50    & 0.0008                         \\
      20    & 55    & 0.0008                         \\
      20    & 60    & 0.0008                         \\
      \hline
    \end{tabular}
    \caption*{\(\lambda=20\)}
  \end{subtable}
  \caption{Relative error on \(\sigma'\) for a disk with \(R_{\mathrm{DOM}}=2\) and \(R_{\mathrm{PML}}-R_{\mathrm{DOM}}=5h\)}
  \label{table:cvg_disk_N}
\end{table}

\subsection{Main numerical results}

The values of \(\sigma'\) in Figure~\ref{f:nontrap} are obtained for \(\lambda\geq 3\) with \(\mu=30\), \(R_{\mathrm{PML}}-R_{\mathrm{DOM}}=5h\) and \(N= 10\lambda\). For \(0.3\leq \lambda<3\), we computed \(\sigma'\), but this required the use of significantly larger \(\mu\): usually \(\mu=300\) for \(0.3 \leq \lambda \leq 2\) and \(\mu=200\) for \(2 \leq \lambda \leq 3\). Figure~\ref{f:trap} was produced in the same way, except that we took \(\mu=100\) away from an interval of size 0.2 centered on the quasimode frequencies (which are explicitly computeable using the eigenvalues of the Laplacian in the ellipse, see~\cite[Section 1.1.3]{MGSS}). On the intervals near quasimode frequences we also needed to increase \(\mu\) significantly, and we took \(\mu=300\). For every geometry, we refined the mesh around corners in order to obtain good precision.

\end{document}